\documentclass[10pt,psamsfonts]{amsart}
\usepackage[utf8]{inputenc}

\usepackage{amsmath}
\usepackage{amsthm}
\usepackage{amssymb}
\usepackage{amscd}
\usepackage{amsfonts}
\usepackage{amsbsy}
\usepackage{graphicx}
\usepackage[dvips]{psfrag}
\usepackage{array}
\usepackage{color}
\usepackage{epsfig}
\usepackage{url}
\usepackage{enumerate}
\newcommand{\Int}{\mathrm{Int}}
\usepackage[top=3cm,bottom=2cm,left=1.5cm,right=1.5cm]{geometry}
	
\usepackage{overpic}
\usepackage{epstopdf}

\newcommand{\R}{\ensuremath{\mathbb{R}}}

\newcommand{\ov}{\overline}

\newcommand{\x}{\mathbf{x}}

\newcommand{\sgn}{\mathrm{sign}}

\newcommand{\bb}{\mathbf{b}}

\def\p{\partial}
\def\e{\varepsilon}

\newtheorem {theorem} {Theorem}
\newtheorem {definition}[theorem] {Definition}
\newtheorem {proposition} {Proposition}

\newtheorem {lemma}[theorem] {Lemma}

\newtheorem {remark} {Remark}

\usepackage{mathpazo}

\begin{document}
\renewcommand{\arraystretch}{1.5}

\title[On the number of limit cycles of planar piecewise linear differential systems]
{Uniform upper bound for the number of limit cycles\\ of planar piecewise linear differential systems\\ with two zones separated by a straight line}

\author[V. Carmona, F. Fernandez-S\'{a}nchez, and D. D. Novaes]
{Victoriano Carmona$^1$, Fernando Fern\'{a}ndez-S\'{a}nchez$^2$,\\ and Douglas D. Novaes$^3$}

\address{$^1$ Dpto. Matem\'{a}tica Aplicada II \& IMUS, Universidad de Sevilla, Escuela Polit\'ecnica Superior.
Calle Virgen de \'Africa 7, 41011 Sevilla, Spain.} 
 \email{vcarmona@us.es} 

\address{$^2$ Dpto. Matem\'{a}tica Aplicada II \& IMUS, Universidad de Sevilla, Escuela T\'{e}cnica Superior de Ingenier\'{i}a.
Camino de los Descubrimientos s/n, 41092 Sevilla, Spain.} \email{fefesan@us.es}

\address{$^3$ Departamento de Matem\'{a}tica, Instituto de Matem\'{a}tica, Estatística e Computa\c{c}\~{a}o Cient\'{i}fica (IMECC), Universidade
Estadual de Campinas (UNICAMP), Rua S\'{e}rgio Buarque de Holanda, 651, Cidade Universit\'{a}ria Zeferino Vaz, 13083--859, Campinas, SP,
Brazil.} \email{ddnovaes@unicamp.br} 

\subjclass[2010]{34A26, 34A36, 34C25}

\keywords{planar piecewise differential linear systems, limit cycles, upper bounds, Poincaré half-maps, Khovanski\u{\i}'s  theory}

\maketitle

\begin{abstract}
The existence of a uniform upper bound for the maximum number of limit cycles of planar piecewise linear differential systems with two zones separated by a straight line has been subject of interest of hundreds of papers. After more than 30 years of investigation since  Lum--Chua's work, it has remained an open question whether this uniform upper bound exists or not. Here, we give a positive answer for this question by establishing the existence of a natural number $L^*\leq 8$ for which any planar piecewise linear differential system with two zones separated by a straight line has no more than $L^*$ limit cycles. The proof is obtained by combining a newly developed integral characterization of Poincar\'{e} half-maps for linear differential systems with an extension of Khovanski\u{\i}'s  theory for investigating the number of intersection points between smooth curves and a particular kind of orbits of vector fields.
\end{abstract}

\section{Introduction and statement of the main result}

The second part of the 16th Hilbert's Problem is one of the most important topics in the qualitative theory of planar differential systems (see, for instance, \cite{Ilyashenko02,Lloyd88}). Roughly speaking, given a positive integer $n,$ this problem inquires about the existence of a uniform upper bound $H(n)$ for the maximum number of limit cycles that planar polynomial differential systems of degree $n$ can have. Since linear differential systems do not admit limit cycles, we have $H(1)=0$. However, it remains unsolved whether $H(n)$ is finite, even for the simplest case $n=2$. 

The same problem has been also considered for planar nonsmooth differential systems. The study of limit cycles for such systems can be traced back to the work of Andronov et. al \cite{AndronovEtAl66} in 1937.  The simplest examples of planar nonsmooth differential systems are the planar piecewise linear differential systems with two zones separated by a straight line,
\begin{equation}\label{s1}
\dot \x =
\left\{\begin{array}{l}
A_L\x+\bb_L, \quad\textrm{if}\quad x_1\leq 0,\\
A_R\x+\bb_R, \quad\textrm{if}\quad x_1\geq 0.
\end{array}\right.
\end{equation}
Here, $\x=(x_1,x_2)\in\R^2,$ $A_{L,R}=(a_{ij}^{L,R})_{2\times 2},$ and  $\bb_{L,R}=(b_1^{L,R},b_2^{L,R})\in\R^2.$ The Filippov's convention \cite{Filippov88} is assumed for trajectories of \eqref{s1}.  In this context, a limit cycle is defined as an isolated crossing periodic solution.

The search for a uniform upper bound for the maximum number of limit cycles of differential systems of kind \eqref{s1} started some decades ago. Indeed, Lum and Chua \cite{LumChua91} in 1991, under the continuity hypothesis $a_{12}^L=a_{12}^R$, $a_{22}^L=a_{22}^R,$ and $\bb_L=\bb_R$,  conjectured that differential system \eqref{s1} had at most one limit cycle. This conjecture was first proven in 1998 by Freire et al. \cite{FreireEtAl98}. The next natural step was to relax the hypothesis of continuity. In 2010, Han and Zhang \cite{HanZhang10} proved the existence of piecewise linear differential systems of kind \eqref{s1} having two limit cycles. Based on their examples, they conjectured that such systems could have at most 2 limit cycles. In 2012, using numerical arguments, Huan and Yang  \cite{HuanYang12} gave a negative answer to this conjecture by showing an example with 3 limit cycles. In the same year, Llibre and Ponce \cite{LlibrePonce12} proved analytically the existence of such numerically observed limit cycles. After that, many other works provided examples with 3 limit cycles  (see, for instance, \cite{BuzziEtAl13,cardoso20,FreireEtAl14, FreireEtAl14b, LlibreEtAl15b,NovaesTorregrosa17}). 

Some partial results can be found in the literature regarding upper bounds for the maximum number of limit cycles for other non-generic families of piecewise linear differential systems (see, for instance, \cite{FreireEtAl12,LI2021101045,Li_2020,LlibreEtAl15,MedradoTorregrosa15,Novaes17}). However, up to now, after more than 30 years of investigation since the Lum--Chua's paper \cite{LumChua91} and hundreds of papers on this matter, it has remained an open question whether there exists or not a uniform upper bound for the maximum number of limit cycles that differential systems of kind \eqref{s1} can have. 

Recently, Carmona and Fern\'{a}ndez-S\'{a}nchez \cite{CarmonaEtAl19} obtained an integral characterization for {\it Poincar\'{e} half-maps} associated to a straight line of planar linear differential systems. This characterization was used in \cite{CarmonaEtAl19b} to provide a new simple proof for the Lum--Chua's conjecture. This last approach has proven to be an effective method to avoid the case-by-case study performed in the former proof in \cite{FreireEtAl98}. The same technique was used in \cite{Carmona2022-mc} to prove that differential system \eqref{s1}, under the assumption of nonexistence of a sliding set, has at most one limit cycle. 

Here, our main result provides a positive answer for the existence of a uniform upper bound for the maximum number of limit cycles that differential systems of kind \eqref{s1} can have.

 \begin{theorem}\label{main}
There exists a natural number $L^*\leq 8$ such that any planar piecewise linear differential system of  kind \eqref{s1} has no more than $L^*$ limit cycles. 
 \end{theorem}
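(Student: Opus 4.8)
My plan is to reduce the counting of limit cycles to a zero-counting problem for a single transcendental displacement function, and then to control that count uniformly by a Khovanski\u{\i}-type argument.

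First I would set up the Poincar\'e half-maps. Since each linear subsystem admits no isolated periodic orbit inside a single zone, every limit cycle of \eqref{s1} is an isolated crossing periodic orbit touching the switching line $\Sigma=\{x_1=0\}$; discarding sliding (which can only remove crossing orbits), it is well known that the monotonicity of the Poincar\'e half-maps forces such an orbit to meet $\Sigma$ at exactly two points. Parametrizing $\Sigma$ by $y=x_2$ and orienting crossings by the sign of $\dot x_1$, I would define the left and right half-maps $P_L$ and $P_R$ on the relevant crossing subintervals. A crossing periodic orbit then corresponds exactly to a coincidence $P_L(y)=P_R(y)$, i.e. to an isolated zero of the displacement function $\Delta=P_L-P_R$. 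Hence it suffices to bound, uniformly in $(A_L,\bb_L,A_R,\bb_R)$, the number of isolated zeros of $\Delta$, after separately verifying that $\Delta\not\equiv 0$ outside the center/period-annulus configurations, which produce no limit cycles.

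Next I would invoke the integral characterization of \cite{CarmonaEtAl19}: in each zone the half-map $y_0\mapsto P(y_0)$ is characterized implicitly through an integral relation whose integrand, together with the relevant exponential-of-$\tra$-times-time factors, satisfies a fixed polynomial differential equation with coefficients built from the entries of $A$ and $\bb$ (equivalently $\tra$ and $\det$). This rewrites the locus $\{\Delta=0\}$ as the set of points that simultaneously lie on an orbit of an auxiliary planar polynomial vector field of controlled degree and on a fixed smooth algebraic curve, turning the problem into one of counting intersection points between such a curve and a particular kind of orbit, exactly the setting of the announced theory.

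Finally I would apply the extension of Khovanski\u{\i}'s theory (the Rolle--Khovanski\u{\i} theory of Pfaffian functions and separating solutions) to bound this number of intersections by a constant depending only on the fixed degrees and the number of building blocks, and not on the actual parameter values; since the vector fields are linear these degrees are fixed, so the bound is uniform and yields a finite $L^*$. To reach the explicit value $L^*\le 8$, I would run the estimate through the finitely many canonical forms for each zone (focus, node, saddle, degenerate node, and the limiting zero-trace and equal-eigenvalue cases), exploiting monotonicity of the half-maps to suppress spurious intersections and optimizing the Rolle count. The hard part will be making the bound genuinely uniform across all degenerate configurations and valid up to the boundary of each half-map's domain, where orbits may escape to infinity and the transcendental half-maps cease to be globally Pfaffian; it is precisely this boundary and degeneracy control that the extension of Khovanski\u{\i}'s theory must supply.
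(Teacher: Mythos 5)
Your high-level skeleton (reduce to a displacement function $\delta_b=y_R(\cdot-b)+b-y_L$, use the integral characterization to realize the half-map graphs as orbits of explicit polynomial vector fields, then invoke a Khovanski\u{\i}-type count) matches the paper's, but the proposal is missing the step that actually makes the Khovanski\u{\i} machinery applicable, and it is not a detail. A zero of $\delta_b$ is an intersection of \emph{two} transcendental orbits (the graphs of $y_L$ and of $y_R(\cdot-b)+b$), not of an orbit with ``a fixed smooth algebraic curve''; as stated, your reduction to ``counting intersection points between such a curve and a particular kind of orbit'' has no curve to feed into Theorem~\ref{thm:count}. The paper's key move is to differentiate: using the ODEs \eqref{eq:odeL} one shows that at any zero $y_0^*$ of $\delta_b$ the sign of $\delta_b'(y_0^*)$ equals the sign of an explicit degree-$4$ polynomial $F_b(y_0^*,y_1^*)$, so the algebraic curve $\gamma_b=F_b^{-1}(\{0\})$ separates attracting from repelling hyperbolic limit cycles; since consecutive hyperbolic limit cycles alternate stability, the number of \emph{simple} zeros of $\delta_b$ is at most one more than the number of isolated intersections of $\gamma_b$ with the single orbit $\mathcal{O}_b=\{(y_0,y_L(y_0))\}$. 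Only then does Theorem~\ref{thm:count} apply, with contact points counted via $G_b=\langle\nabla F_b,X_L\rangle$, a symmetric change of variables $(Y_0,Y_1)=(y_0+y_1,y_0y_1)$ dropping the Bezout bound to $6$, and the exclusion of one spurious root, giving $5+2=7$ intersections and hence $8$ simple zeros. Your plan to obtain the explicit $8$ by running canonical-form cases (focus, node, saddle, etc.) is exactly the case-by-case analysis this method is designed to avoid, and there is no indication it would land on $8$.

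Two further gaps. First, the alternation argument only controls \emph{hyperbolic} limit cycles (simple zeros); to bound all limit cycles one must pass from simple to isolated zeros, which the paper does with a separate monotonicity lemma (Lemma~\ref{lem:semi}), using $\partial\delta_b/\partial b=1-y_R'(y_0-b)>0$ to unfold any degenerate zero into at least as many simple zeros of nearby members of the family; your proposal never addresses non-hyperbolic limit cycles. Second, the boundary and degeneracy control you flag as ``the hard part'' is handled not by a strengthening of Pfaffian theory but by the elementary observation that the relevant domain $U$ in \eqref{XLU} is an open simply connected set on which $\mathcal{O}_b$ is a separating solution in the sense of Definition~\ref{def:ss}, so Theorem~\ref{thm:kov0} transfers via the Riemann Mapping Theorem; no global Pfaffianity of the half-maps is required.
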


Theorem \ref{main} is proven in Section \ref{sec:proof} by combining the integral characterization for Poincar\'{e} half-maps provided in \cite{CarmonaEtAl19} with an extension of Khovanski\u{\i}'s  theory for investigating the number of intersection points between smooth curves and a particular kind of orbits of vector fields. Section \ref{sec:kov} is dedicated to present  this extension of Khovanski\u{\i}'s  theory.

\section{Intersection between smooth curves with separating solutions}\label{sec:kov}

Khovanski\u{\i}, in \cite[Chapter II]{kho}, introduces the concept of {\it separating solutions} for vector fields defined on the whole plane. In his definition, an orbit of a vector field is a separating solution if it either is a cycle or corresponds to a noncompact trajectory that goes to and comes from infinity.

In the following result, Khovanski\u{\i} bounds the number of isolated intersection points between a given smooth curve and any orbit of a vector field that is a separating solution by means of the number of {\it contact points} between the curve and the vector field (that is, points of the curve in which the vector field is tangent to the curve at these points). Here, a smooth curve means a 1-dimensional $C^1$ submanifold of the plane (without boundary and possibly nonconnected). 

\begin{theorem}[{\cite[Corollary of Section 2.1]{kho}}]\label{thm:kov0} Consider a smooth vector field $X:\R^2\to\R^2$. Let a smooth curve  $\gamma\subset \R^2$  have at most $N$ noncompact (and any number of compact) connected components and have at most $k$ contact points with $X$. Then, there are at most $N+k$ isolated intersection points between $\gamma$ and any orbit of $X$ that is a separating solution.
\end{theorem}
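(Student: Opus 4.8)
Let $\Gamma$ be an orbit of $X$ that is a separating solution, and let $\gamma_0$ denote a connected component of $\gamma$. The whole argument rests on exploiting the word \emph{separating}: by its very definition $\Gamma$ is an embedded curve (a cycle, or a properly embedded line going to and coming from infinity) that splits $\R^2$ into two open regions $U^+$ and $U^-$. This yields a globally defined side function $\sigma\colon\R^2\setminus\Gamma\to\{+,-\}$. Moreover, since $\Gamma$ is a \emph{single} orbit oriented by the flow of $X$ and $U^+,U^-$ are its two sides, the region $U^+$ lies consistently on one side of the oriented orbit. Choosing a local flow-box transversal coordinate $h$ near $\Gamma$ (so that $\Gamma=\{h=0\}$, $X\cdot\nabla h=0$, and $h>0$ toward $U^+$), this consistency makes $\nabla h$ proportional to the $90^\circ$ rotation of $X$ with a \emph{globally positive} factor; that is, $\nabla h = c\,(-X_2,X_1)$ with $c>0$, wherever $h$ is defined.

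\textbf{The key Rolle--Khovanski\u{\i} step.} Parameterize $\gamma_0$ by $t$ and set $\omega(t)=X(\gamma_0(t))\wedge\dot\gamma_0(t)$, the scalar $X_1\dot\gamma_{0,2}-X_2\dot\gamma_{0,1}$, whose zeros are exactly the contact points of $\gamma_0$ with $X$. From the previous paragraph, wherever $h$ is defined one has $\frac{d}{dt}h(\gamma_0(t))=\nabla h\cdot\dot\gamma_0(t)=c\,\omega(t)$ with $c>0$, so near any transversal crossing the sign of $\omega$ records the crossing direction ($U^-\to U^+$ against $U^+\to U^-$). Now take two consecutive isolated intersection points $p_i<p_{i+1}$ of $\gamma_0$ with $\Gamma$. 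Between them $\gamma_0$ avoids $\Gamma$, so $\sigma$ is constant there; hence $\gamma_0$ enters a fixed region (say $U^+$) at $p_i$ and leaves it at $p_{i+1}$. ``Entering'' forces $h$ to increase through $0$ and ``leaving'' forces $h$ to decrease through $0$, i.e.\ $\omega(p_i)>0>\omega(p_{i+1})$, and the intermediate value theorem produces a zero of $\omega$ — a contact point — strictly inside $(p_i,p_{i+1})$.

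\textbf{Bookkeeping.} The contact points produced in distinct gaps lie in disjoint open intervals, hence are distinct. If $\gamma_0$ is noncompact with $k_0$ contact points and $m$ isolated intersection points, the $m-1$ gaps furnish $m-1\le k_0$ distinct contact points, so $m\le k_0+1$. If $\gamma_0$ is compact (a topological circle), running the same argument cyclically gives $m$ gaps and therefore $m\le k_0$. Summing over the at most $N$ noncompact components and the arbitrarily many compact ones, the total number of isolated intersection points of $\gamma$ with $\Gamma$ is at most $\big(\sum_{\gamma_0}k_0\big)+\#\{\text{noncompact components}\}\le k+N$, which is the claimed bound.

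\textbf{Main obstacle.} The decisive difficulty is that $X$ carries \emph{no} global first integral, so one cannot apply ordinary Rolle's theorem to a single globally defined function: the transversal coordinate $h$ lives only in a neighborhood of $\Gamma$. The separating hypothesis is precisely what repairs this, providing the global sign $\sigma$ and a globally coherent transversal orientation, so that only the sign of $\omega$ \emph{read off locally near the two crossings} is needed, and the intermediate value theorem alone forces an interior contact point. The remaining technical care concerns degenerate (tangential or higher-order) intersection points: each such point is itself a contact point, and one must verify — by a local multiplicity analysis, or by perturbing $\gamma$ to transversal position and passing to the limit — that the counting $N+k$ survives these coincidences, together with the elementary topological fact that a compact component must meet $\Gamma$ transversally an even number of times.
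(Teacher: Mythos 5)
The paper does not actually prove this statement: Theorem~\ref{thm:kov0} is imported verbatim from Khovanski\u{\i}'s \emph{Fewnomials} (the Corollary of Section~2.1), and the only argument the authors supply is for its extension, Theorem~\ref{thm:count}, via a diffeomorphism onto the plane. So there is no in-paper proof to compare against; measured against the source, your argument is the standard Rolle--Khovanski\u{\i} one and is essentially correct. The key points are all present: a separating solution disconnects the plane into two sides, its co-orientation (the $90^\circ$ rotation of $X$ along the connected orbit) is globally coherent, hence the sign of $\omega=X\wedge\dot\gamma_0$ read off near a crossing records the crossing direction, and the intermediate value theorem applied to the \emph{globally defined} $\omega$ (rather than to the only locally defined transversal coordinate $h$) forces a contact point strictly inside each gap between consecutive intersections; the count $m\le k_0+1$ per noncompact component and $m\le k_0$ per compact component then sums to $N+k$. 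You correctly identify the absence of a global first integral as the obstruction and the separation hypothesis as its cure. The one step you flag but do not execute --- degenerate (tangential) intersection points --- deserves a sentence more care: the clean way is to note that at a non-crossing or higher-order intersection the point is itself a zero of $\omega$, and that in every open gap one can still produce parameters $s_1<s_2$ near the two endpoints with $\omega(s_1)>0>\omega(s_2)$ (using only that the arc lies on one fixed side and that $h$ is defined in a neighborhood of $\Gamma$), so the contact point lands in the \emph{open} gap and adjacent gaps cannot share it; likewise, the hypothesis that $\gamma$ has only finitely many contact points already forces the intersection set of each component with $\Gamma$ to be discrete, so ``consecutive'' intersection points are well defined. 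With those two remarks your sketch closes up into a complete proof.
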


In the present paper, we will apply an extension of this result for vector fields defined on open simply connected subsets of the plane, which is based on the fact that such subsets are diffeomorphic to the whole plane (as an application of the Riemann Mapping Theorem \cite{ahl} together with the fact that the unit disc is diffeomorphic to the plane). Accordingly, the definition above for an orbit to be a separating solution can be immediately extended for vector fields defined on simply connected open subsets of $\R^2$ as follows.

\begin{definition}\label{def:ss}
Consider a smooth vector field $X:U\to\R^2$ defined on an open simply connected subset $U\subset\R^2.$ An orbit $\mathcal{O}$ of the vector field $X$ is called a {\bf separating solution} if it is either a cycle or a noncompact trajectory satisfying $(\ov{\mathcal{O}}\setminus\mathcal{O})\subset\partial U$. Here, as usual, $\ov{\mathcal{O}}$ and $\partial U$ denote, respectively, the closure of $\mathcal{O}$ and the boundary of $U$ with respect to the $\R^2$ topology.
\end{definition}

 Let us provide some clarification on Definition \ref{def:ss}. Denote by  $\phi:U\to\R^2$ a diffeomorphism between $U$ and $\R^2$. When $\mathcal{O}$ is a cycle, $\phi(\mathcal{O})$ is also a cycle of the transformed vector field $\phi_* X:\R^2\to \R^2$ and Definition \ref{def:ss} agrees with the definition given by Khovanski\u{\i}. When $\mathcal{O}$ is noncompact, it is the image in $U$ of an open interval by an injective function and, thus, the condition $(\ov{\mathcal{O}}\setminus\mathcal{O})\subset\partial U$ implies that $\phi(\mathcal{O})$ is an orbit of the transformed vector field $\phi_* X$ that goes to and comes from infinity, i.e. a separating solution of $\phi_* X$ in the Khovanski\u{\i} sense.

The next result extends Theorem \ref{thm:kov0} to  orbits that are separating solutions of vector fields defined on open simply connected subset of the plane. Its proof, as mentioned before, follows immediately by transforming $U$ into the whole plane via a diffeomorphism and, then, applying Theorem \ref{thm:kov0}.

\begin{theorem}\label{thm:count}
Consider a smooth vector field $X:U\to\R^2$ defined on an open simply connected subset $U\subset\R^2.$ Let a smooth curve  $\gamma\subset \R^2$  have at most $N$ noncompact (and any number of compact) connected components and have at most $k$ contact points with $X$. Then, there are at most $N+k$ isolated intersection points between $\gamma$ and any orbit of $X$ that is a separating solution.
\end{theorem}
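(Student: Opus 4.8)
The plan is to reduce the statement to Khovanski\u{\i}'s Theorem \ref{thm:kov0} by transporting the entire configuration from $U$ to the whole plane through a single diffeomorphism. If $U=\R^2$ the claim is literally Theorem \ref{thm:kov0}, so I would assume $U\subsetneq\R^2$. As $U$ is a proper, open, simply connected subset of $\R^2$, the Riemann Mapping Theorem furnishes a biholomorphism of $U$ onto the open unit disc, and post-composing it with a diffeomorphism of the disc onto $\R^2$ produces a $C^\infty$ diffeomorphism $\phi\colon U\to\R^2$. I would then push the vector field forward, $\tilde X=\phi_*X$, obtaining a smooth vector field on $\R^2$, and record the three invariance properties of $\phi$ on which the whole transfer rests: it maps orbits of $X$ to orbits of $\tilde X$, it carries compact sets to compact sets (and noncompact sets to noncompact sets), and it sends tangencies between a curve and $X$ to tangencies between the image curve and $\tilde X$.

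Next I would transfer the remaining data. Since every intersection point to be counted lies on $\mathcal{O}\subset U$, all of them already belong to $\gamma\cap U$, and $\phi$ restricts to a bijection between the isolated intersections of $\gamma\cap U$ with $\mathcal{O}$ and those of $\tilde\gamma=\phi(\gamma\cap U)$ with $\tilde{\mathcal{O}}=\phi(\mathcal{O})$; in particular no intersection point is lost in passing to $U$. Using the invariances above, $\tilde\gamma$ is again a smooth curve, each of whose connected components is compact or noncompact exactly according as its $\phi$-preimage is, so $\tilde\gamma$ carries over the bound of at most $N$ noncompact components; likewise its contact points with $\tilde X$ are precisely the $\phi$-images of the contact points of $\gamma$ with $X$, hence at most $k$ of them. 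Finally, by the discussion following Definition \ref{def:ss}, $\tilde{\mathcal{O}}$ is a separating solution of $\tilde X$ in Khovanski\u{\i}'s original sense: a cycle is sent to a cycle, whereas a noncompact $\mathcal{O}$ satisfying $(\ov{\mathcal{O}}\setminus\mathcal{O})\subset\partial U$ is sent to a trajectory that goes to and comes back from infinity.

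Having verified every hypothesis of Theorem \ref{thm:kov0} for the triple $(\tilde X,\tilde\gamma,\tilde{\mathcal{O}})$, I would invoke that theorem to bound the isolated intersections of $\tilde\gamma$ with $\tilde{\mathcal{O}}$ by $N+k$, and then pull the count back through $\phi$ to obtain the same bound for $\gamma$ and $\mathcal{O}$. The step I expect to require the most care — and the only one where the geometry of $U$, rather than mere diffeomorphism invariance, enters — is the verification that the pushforward of a noncompact separating solution is again a separating solution in Khovanski\u{\i}'s sense, i.e. that a trajectory whose limit set lies on $\partial U$ is genuinely carried to one escaping to infinity at both ends. This is exactly what the closure condition in Definition \ref{def:ss} is designed to guarantee once $U$ has been identified with $\R^2$ through the Riemann map, and it is the crux that upgrades Theorem \ref{thm:kov0} from the whole plane to an arbitrary simply connected domain $U$.
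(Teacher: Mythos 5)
Your proposal is correct and is essentially the paper's own argument: the paper proves this theorem in a single sentence ("transform $U$ into the whole plane via a diffeomorphism and apply Theorem \ref{thm:kov0}"), and you have simply spelled out the same reduction in full, including the verification that cycles and noncompact separating solutions are carried to Khovanski\u{\i}-type separating solutions. The only point worth flagging is that the bound of $N$ noncompact components must be read as counting components of $\gamma\cap U$ (as the paper itself does when it applies the theorem), since a single component of $\gamma$ in $\R^2$ could meet $U$ in several noncompact arcs; with that reading your transfer of the hypotheses through $\phi$ is exactly right.
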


\section{Proof of the main result}\label{sec:proof}
This section is completely dedicated to the proof of Theorem \ref{main}.  We start by establishing a technical lemma ensuring that a uniform upper bound for the number of simple zeros of a 1-parameter family of analytic functions, under a suitable monotonicity condition on the parameter, also bounds the number of isolated zeros of functions in this family.

\begin{lemma}\label{lem:semi}
Let $I,J\subset\R$ be open intervals and consider a smooth function $\delta:I\times J\to \R$. Assume that
\begin{itemize}
\item[i.] for each $b\in J$, the function $\delta(\cdot,b)$ is analytic;
\item[ii.] there exists a natural number $N$ such that, for each $b\in J$, the number of simple zeros of the function $\delta(\cdot,b)$ does not exceed $N$; and 
\item[iii.] $\dfrac{\p \delta}{\p b}(u,b)>0$, for every $(u,b)\in I\times J$.
\end{itemize}
Then, for each $b\in J$, the function $\delta(\cdot,b)$ has at most $N$ isolated zeros.
\end{lemma}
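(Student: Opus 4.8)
\emph{Plan of proof.} I would fix $b_0\in J$ and reduce to a counting statement. If $\delta(\cdot,b_0)\equiv 0$ it has no isolated zeros and there is nothing to prove, so I would assume $\delta(\cdot,b_0)\not\equiv 0$; by hypothesis (i) all its zeros are then isolated and of finite multiplicity, and it suffices to bound their number by $N$. The idea is to use the strict monotonicity (iii) to pass from $b_0$ to nearby parameter values at which \emph{all} zeros are simple, where (ii) applies directly. By (iii), for each fixed $u$ the map $b\mapsto\delta(u,b)$ is strictly increasing, so $\delta$ vanishes at most once on each vertical line; hence the zero set of $\delta$ is the graph $b=\beta(u)$ of a function $\beta$ defined on an open subset of $I$, which is $C^\infty$ by the implicit function theorem since $\p_b\delta\neq 0$. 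Differentiating $\delta(u,\beta(u))\equiv 0$ yields $\beta'(u)=-\p_u\delta/\p_b\delta$, so a zero $u$ of $\delta(\cdot,b)$ is simple exactly when $\beta'(u)\neq 0$. Consequently, if $b$ is a regular value of $\beta$, then every zero of $\delta(\cdot,b)$ is simple and (ii) bounds their total number by $N$. By Sard's theorem the regular values of $\beta$ are dense, their complement having measure zero.

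The main obstacle is that a zero of \emph{even} multiplicity can disappear under a perturbation of $b$ in one direction, so a one-sided limiting argument from nearby regular values need not recover all the zeros present at $b_0$. I would resolve this with a symmetric, two-sided count. Let $u_1<\cdots<u_M$ be any finite set of zeros of $\delta(\cdot,b_0)$, and choose $\varepsilon>0$ so small that the intervals $[u_j-\varepsilon,u_j+\varepsilon]$ are pairwise disjoint, contained in $I$, and contain no other zero of $\delta(\cdot,b_0)$. Then I would pick $s>0$ small enough that $b_0\pm s\in J$ are both regular values of $\beta$ and that $\delta$ retains its (nonzero) sign at every endpoint $u_j\pm\varepsilon$; this is possible because the endpoint values are bounded away from $0$ and, since both critical-value sets have measure zero, such $s$ exist arbitrarily close to $0$. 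Writing $n_j^{\pm}$ for the number of zeros of $\delta(\cdot,b_0\pm s)$ in $(u_j-\varepsilon,u_j+\varepsilon)$, all these zeros are simple because $b_0\pm s$ are regular values.

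The crux is then the local estimate $n_j^+ + n_j^-\ge 2$ for every $j$, which is exactly where (iii) does the essential work. If $u_j$ has odd multiplicity, $\delta(\cdot,b_0)$ changes sign across $u_j$, so the preserved opposite endpoint signs force at least one zero in the interval at each level $b_0\pm s$, giving $n_j^+,n_j^-\ge 1$. If $u_j$ has even multiplicity, $\delta(\cdot,b_0)$ has equal endpoint signs, say both positive; since $b\mapsto\delta(u_j,b)$ is strictly increasing we get $\delta(u_j,b_0-s)<0$, so $\delta(\cdot,b_0-s)$ is positive at both endpoints and negative at $u_j$, producing two sign changes and hence $n_j^-\ge 2$ (the both-negative case is symmetric via $b_0+s$). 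In all cases $n_j^++n_j^-\ge 2$. Summing over the disjoint neighborhoods and using that each regular level $b_0\pm s$ carries at most $N$ simple zeros by (ii), I obtain $2M\le\sum_j n_j^+ + \sum_j n_j^-\le N+N=2N$, so $M\le N$. Since this holds for every finite subset of zeros of $\delta(\cdot,b_0)$, the total number of isolated zeros is at most $N$, as claimed.
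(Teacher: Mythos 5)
Your proof is correct, and it runs on the same engine as the paper's --- perturb the parameter $b$ and use the monotonicity (iii) to control how zeros of each parity of multiplicity respond --- but the technical execution is genuinely different. The paper works locally at each isolated zero: it writes $\delta(u,\ov b)=\ov a(u-\ov u)^{\ov k}+O((u-\ov u)^{\ov k+1})$, classifies the unfolding into the three cases ($\ov k$ odd; $\ov k$ even with $\ov a>0$; $\ov k$ even with $\ov a<0$), asserts that the resulting continuation branches consist of \emph{simple} zeros for $b\neq\ov b$, and then makes a one-sided pigeonhole move: assuming $e^+\geq e^-$ without loss of generality, perturbing $b$ downward yields at least $o+2e^+\geq N+1$ simple zeros, contradicting (ii). You instead observe that (iii) makes the full zero set of $\delta$ a graph $b=\beta(u)$ of a smooth function, identify simple zeros with regular points of $\beta$, and invoke Sard's theorem to select levels $b_0\pm s$ at which \emph{all} zeros are automatically simple; the splitting is then detected purely by intermediate-value sign counting, and the symmetric estimate $n_j^++n_j^-\geq 2$ replaces the WLOG one-sided argument. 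What your route buys is that the simplicity of the perturbed zeros --- which the paper asserts as part of the unfolding description without a detailed argument --- comes for free from the choice of regular values, at the modest cost of importing Sard's theorem and the implicit function theorem. Both arguments use hypothesis (i) in the same two places (isolatedness and finite multiplicity of zeros of $\delta(\cdot,b_0)$), and both conclude by confronting the count with hypothesis (ii). Your write-up is complete as it stands; the only point worth making explicit in a final version is that the set of $s>0$ for which $b_0+s$ or $b_0-s$ is a critical value of $\beta$ has measure zero, so admissible $s$ exist arbitrarily close to $0$, which you do indicate.
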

\begin{proof}
For a fixed $\ov b\in J$, let $\ov u\in I$ be an isolated zero of $\delta(\cdot,\ov b)$. In this case, since $\delta(\cdot,\ov b)$ is analytic, there exist $\ov a\neq0$, a positive integer $\ov k,$ and an analytic function $R$ such that 
$
\delta(u,\ov b)=\ov a(u-\ov u)^{\ov k}+(u-\ov u)^{\ov k+1}R(u).
$

We start this proof by describing the unfolding of the isolated zero $\ov u$ in three distinct scenarios, namely: $(O)$ when $\ov k$ is odd; ($E^+$) when $\ov k$ is even and $\ov a>0;$ and ($E^-$) when $\ov k$ is even and $\ov a<0$. Taking into account condition (iii), we get the following unfolding in each scenario:
\begin{itemize}
\item If $\ov u$ satisfies $O$ (that is, $\ov k$ is odd), then there exists $\e>0$ sufficiently small such that the map $ \delta(\cdot,b)$ has a continuous branch of zeros, $u_0:(\ov b-\e,\ov b+\e)\subset J\to \R$, which are simple for $b\neq\ov b$ and $u_0(\ov b)=\ov u;$

\item If $\ov u$ satisfies $E^+$ (that is, $\ov k$ is even and $\ov a>0$), then there exists $\e>0$ sufficiently small such that the map $ \delta(\cdot,b)$ has two continuous branches of zeros, $u_1,u_2:(\ov b-\e,\ov b]\subset J\to\R$, which are simple for $b\neq\ov b$ and $u_1(\ov b)=u_2(\ov b)=\ov u;$

\item If $\ov u$ satisfies $E^-$ (that is, $\ov k$ is even and $\ov a<0$), then there exists $\e>0$ sufficiently small such that the map $ \delta(\cdot,b)$ has two continuous branches of zeros, $u_1,u_2:(\ov b,\ov b+\e]\subset J\to\R$, which are simple for $b\neq\ov b$ and $u_1(\ov b)=u_2(\ov b)=\ov u.$
\end{itemize}

Now, assume by absurd that there exists $b^*\in J$ such that the map $\delta(\cdot,b^*)$ has more than $N$  isolated zeros. Consider an amount of $N+1$ of these zeros and let $o,e^+,$ and $e^-$ be the number of such zeros
satisfying $O,E^+,$ and $E^-$, respectively. Clearly, $o+e^++e^-= N+1$. Let us assume, without loss of generality, that $e^+\geq e^-$. Thus, from the unfolding scenarios above we can choose a minimum $\e>0$ such that, for each $b\in(b^*-\e,b^*)\subset J$, the map $\delta(\cdot,b)$ has at least $o+2e^+\geq N+1$ simple zeros, which contradicts assumption (i). It concludes this proof.
\end{proof}

Now, before proving Theorem \ref{main}, we must set forth some preliminary concepts and results. Under the assumption $a_{12}^{L}a_{12}^{R}>0$ (which is necessary for the existence of limit cycles), Freire et. al in \cite[Proposition 3.1]{FreireEtAl12} provided that the differential system \eqref{s1} is transformed, by a homeomorphism preserving the separation line $\Sigma=\{(x,y)\in\R^2:\,x=0\}$, into the following Li\'enard canonical form 
\begin{equation}\label{cf}
\left\{\begin{array}{l}
\dot x= T_L x-y\\
\dot y= D_L x-a_L
\end{array}\right.\quad \text{for}\quad x< 0,
\quad 
\left\{\begin{array}{l}
\dot x= T_R x-y+b\\
\dot y= D_R x-a_R
\end{array}\right.\quad \text{for}\quad x> 0,
\end{equation}
where $a_{L}=a_{12}^{L}b_2^{L}-a_{22}^{L}b_1^{L},$ $a_{R}=a_{12}^{R}b_2^{R}-a_{22}^{R}b_1^{R},$ $b=a_{12}^Lb_1^R/a_{12}^R-b_1^L$, and $T_L,$ $T_R$ and $D_L,$ $D_R$ are, respectively, the traces and determinants of the matrices $A_L$ and $A_R$.

The periodic behavior of differential system \eqref{cf} can be analyzed by means of two Poincar\'{e} Half-Maps associated to $\Sigma$, namely, the {\it Forward Poincar\'{e} Half-Map}  $y_L: I_L\subset [0,+\infty) \longrightarrow(-\infty,0]$ and  the {\it Backward Poincar\'{e} Half-Map} $y_R^b:I_R^b\subset [b,+\infty)\rightarrow (-\infty,b]$. The forward one maps a point $(0,y_0)$, with $y_0\geq0$, to a point $(0,y_L(y_0))$ by following the flow in the positive direction. Analogously, the backward one maps a point $(0,y_0)$, with $y_0\geq b$, to $(0,y_R^ b(y_0))$ by following the flow in the negative direction. Notice that the left differential system defines $y_L$ and the right differential system defines $y_R^b$. The map  $y_L$ is characterized by an integral relationship provided by Theorem 19, Corollary 21, and Remark 24 of \cite{CarmonaEtAl19}. The map $y_R^b$ can also be characterized via such results just by considering a change of variables and parameters. It is worthwhile to mention that $y_R^b(y_0)=y_R^0(y_0-b)+b$ and $I_R^b=I_R^0+b$, where $y_R^0:I_R^0\subset [0,+\infty)\rightarrow (-\infty,0]$ is the  {\it Backward Poincar\'{e} Half-Map} of \eqref{cf} for $b=0$. For the sake of simplicity, let us denote $y_R^0$ and $I_R^0$ just by $y_R$ and $I_R$, respectively.

Important properties, described in \cite{CarmonaEtAl19}, of the maps $y_L$ and $y_R$ are obtained from the following polynomials
\begin{equation}\label{eq:poly}
W_L(y)=D_Ly^2-a_LT_Ly+a_L^2\quad\text{and}\quad W_R(y)=D_Ry^2-a_RT_Ry+a_R^2.
\end{equation}
Indeed, the graphs of $y_L$ and $y_R$, oriented according to increasing $y_0$, are, respectively, the portions included in the fourth quadrant of particular orbits of the 
cubic vector fields
\begin{equation}\label{dy_1L}
X_L(y_0,y_1)=-\big(y_1W_L(y_0) ,y_0 W_L(y_1)\big) \quad\text{and}\quad X_R(y_0,y_1)=-\big(y_1W_R(y_0) ,y_0 W_R(y_1)\big).
\end{equation}
In addition, the curves $y_L(y_0)$ and $y_R(y_0)$ are, respectively, solutions of the differential equations
 \begin{equation}
\label{eq:odeL}
\dfrac{d y_1}{d y_0}= \dfrac{y_0W_L(y_1)}{y_1W_L(y_0)} \quad \text{and}\quad  \dfrac{d y_1}{d y_0}= \dfrac{y_0W_R(y_1)}{y_1W_R(y_0)} .
\end{equation}

\begin{remark}\label{rem:prop}
\label{remark:intervlas} The polynomials given in \eqref{eq:poly} also provide information regarding the intervals $I_L$ and $I_R$ of definition of $y_L$ and $y_R$, respectively.  The smallest positive root of  $W_L$, if any, is the right endpoint of $I_L$. Analogously, the greatest negative root of  $W_L$, if any, is the left endpoint of $y_L(I_L)$. When $4D_L-T_L^2>0$, since the polynomial $W_L$ has no roots, the intervals  $I_L$  and $y_L(I_L)$  are unbounded with $y_L(y_0)$  tending to $-\infty$ as $y_0\to +\infty$.  The polynomial $W_L$  is strictly positive in  $[y_L(y_0),0 )\cup (0,y_0]$,  for $y_0\in I_L$. The same conclusions are valid for $y_R$. 
\end{remark}

Another property which will be important in this proof concerns about the relative position between the graph of the forward Poincar\'{e} half-map and the bisector of the fourth quadrant (see \cite{Carmona2021-ru} for a proof). A similar result can be given for the backward Poincar\'{e} half-map.

\begin{proposition}
\label{rm:signoy0+y1}
The relationship $\sgn\left(y_0+y_L(y_0) \right)=-\sgn(T_L)$ holds for $y_0\in  I_L\setminus\{0\}$.
\end{proposition}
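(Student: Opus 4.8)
The plan is to study the sign of $h(y_0)=y_0+y_L(y_0)$ along the orbit of the cubic field $X_L$ whose fourth--quadrant portion is the graph of $y_L$, combining a transversality computation that bounds the number of sign changes of $h$ with a boundary analysis that fixes the actual sign. First I would dispose of the reversible case $T_L=0$: then the left system is invariant under $(x,y,t)\mapsto(x,-y,-t)$, so $y_L(y_0)=-y_0$ and $h\equiv0$, matching $\sgn(T_L)=0$. Hence assume $T_L\neq0$.

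The core is a local computation. Using the differential equation \eqref{eq:odeL} satisfied by the graph, $y_L'(y_0)=y_0W_L(y_L(y_0))/\big(y_L(y_0)W_L(y_0)\big)$, at any $y_0\in I_L\setminus\{0\}$ with $h(y_0)=0$ (that is, $y_L(y_0)=-y_0$) one gets
\begin{equation*}
h'(y_0)=1-\frac{W_L(-y_0)}{W_L(y_0)}=\frac{W_L(y_0)-W_L(-y_0)}{W_L(y_0)}=\frac{-2a_LT_L\,y_0}{W_L(y_0)}.
\end{equation*}
Since $y_0>0$ and $W_L(y_0)>0$ on $I_L\setminus\{0\}$ by Remark \ref{rem:prop}, every zero of $h$ is simple and $h'$ has there the \emph{same} sign $-\sgn(a_LT_L)$ (geometrically, $X_L$ crosses the anti--diagonal $\{y_1=-y_0\}$ always in one direction, as $X_L\cdot(1,1)=-2a_LT_Ly_0^2$ on it). Between two consecutive zeros $h$ would have to return to $0$ with a derivative of the opposite sign, which is impossible; hence $h$ has at most one zero in $I_L$, and it suffices to fix the sign at one end of the orbit.

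To fix the sign I would use the integral characterization: the graph of $y_L$ lies on a single orbit of $X_L$, so $V(y_0)-V(y_L(y_0))\equiv C_L$ is constant, where $V(y)=\int_0^y s\,ds/W_L(s)$ is the first integral (strictly increasing for $y>0$, strictly decreasing for $y<0$). The auxiliary function $\Psi(y_0)=V(y_L(y_0))-V(-y_0)$ then satisfies $\sgn h=-\sgn\Psi$ (monotonicity of $V$ on each half--line) and a direct computation gives $\Psi'(y_0)=2a_LT_Ly_0^2/\big(W_L(y_0)W_L(-y_0)\big)$, so $\Psi$ is strictly monotone of sign $a_LT_L$ where $W_L(-y_0)>0$. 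The clean case is $a_L>0$: since $\ov{\ddot x}|_{(0,0)}=a_L>0$, the orbit through the origin is tangent to $\Sigma$ from the side $x\ge0$, whence $y_L(y_0)\to0$ as $y_0\to0^+$, so $C_L=0$ and $\Psi=V(y_0)-V(-y_0)=2a_LT_L\int_0^{y_0}s^2\,ds/\big(W_L(s)W_L(-s)\big)$ has the sign of $a_LT_L=\sgn(T_L)$; equivalently the expansion $h(y_0)=-\tfrac{2T_L}{3a_L}y_0^2+O(y_0^3)$ already yields $\sgn h=-\sgn(T_L)$ near $0$, and the at--most--one--zero property propagates this to all of $I_L$.

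The hard part is the case $a_L<0$, where the left equilibrium sits in $x<0$, the half--map orbit does not pass through the origin ($C_L\neq0$), and the derivative sign alone permits a crossing. Here one must read the sign at the far end of the orbit, where it limits to the corner of the invariant box $(0,\rho_L)\times(\sigma_L,0)$ cut out by the roots $\rho_L,\sigma_L$ of $W_L$; the argument splits according to whether these roots are finite, and in the unbounded sub--cases one needs the asymptotics $V(y)=\tfrac1{D_L}\ln|y|+c_\pm+o(1)$ as $y\to\pm\infty$ to resolve the indeterminate limit $h\to+\infty-\infty$ and to show that the monotone $\Psi$ never reaches $0$, keeping the sign $-\sgn(T_L)$. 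Establishing this far--endpoint sign is the main obstacle; it is exactly where the trace $T_L$, i.e.\ the divergence of the left field, enters as the quantity governing whether the half--return contracts ($h>0$) or expands ($h<0$). This moral reason is transparent in the degenerate case $a_L=0$ (equilibrium on $\Sigma$), which can be treated directly: the half--return multiplier is then $y_L(y_0)/y_0=-e^{T_L\pi/\omega}$, so $\sgn h=\sgn\!\big(1-e^{T_L\pi/\omega}\big)=-\sgn(T_L)$.
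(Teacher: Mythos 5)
First, note that the paper does not actually prove Proposition \ref{rm:signoy0+y1}: it is imported from the reference \cite{Carmona2021-ru}, so there is no internal proof to compare your route against. Judged on its own terms, the parts of your argument that you actually carry out are correct: the computation $h'(y_0)=-2a_LT_Ly_0/W_L(y_0)$ at a zero of $h(y_0)=y_0+y_L(y_0)$ (equivalently, $X_L\cdot(1,1)=-2a_LT_Ly_0^2$ on the anti-diagonal) together with Remark \ref{rem:prop} does force all crossings of $\{y_1=-y_0\}$ to be one-directional when $a_LT_L\neq0$, hence at most one zero of $h$; the expansion $h(y_0)=-\tfrac{2T_L}{3a_L}y_0^2+O(y_0^3)$ for $a_L>0$ checks out against the first integral $V(y)=\int_0^y s\,ds/W_L(s)$ with $C_L=0$; and the cases $T_L=0$ and $a_L=0$ are handled correctly.

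The genuine gap is the one you name yourself: the case $a_L<0$ is never finished, and that is where essentially all of the difficulty of the proposition lives. "Reading the sign at the far end" is not a routine limit: when $W_L$ has a positive root $\rho_L$ and a negative root $\sigma_L$, the orbit $\mathcal{O}$ tends to the corner $(\rho_L,\sigma_L)$ and $h\to\rho_L+\sigma_L=a_LT_L/D_L$, whose sign is $+\sgn(T_L)$ when $a_L<0$ and $D_L<0$ --- the \emph{opposite} of what the proposition asserts. The statement survives only because in that configuration $\dot y=D_Lx-a_L>0$ throughout $x<0$, so no orbit returns and $I_L=\emptyset$; similar vacuity or boundedness arguments are needed in the node sub-cases (e.g.\ $a_L<0$, $D_L>0$, $T_L>0$, where $W_L$ has no positive root and a naive endpoint limit gives $h\to+\infty$). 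So the missing step is not merely an asymptotic computation with $V(y)\sim D_L^{-1}\ln|y|$: you must first determine, configuration by configuration (real vs.\ virtual equilibrium, focus/node/saddle), what the endpoints of $I_L$ and of $y_L(I_L)$ actually are, which sub-cases are empty, and only then evaluate the limit of $h$ there. A further technical point: your auxiliary function $\Psi(y_0)=V(y_L(y_0))-V(-y_0)$ requires $W_L$ to be nonvanishing on the segment between $-y_0$ and $0$, which Remark \ref{rem:prop} guarantees only on $[y_L(y_0),0)$; when $h(y_0)>0$ the point $-y_0$ lies outside that interval, so the global monotonicity of $\Psi$ on $I_L$ is not justified as stated. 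As written, the proposal is therefore an incomplete proof.
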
 

Now, we can proceed with the proof of Theorem \ref{main}.

As usual, crossing periodic solutions of \eqref{cf} are studied by means of the displacement function $\delta_b$, which is defined in the interval $I_b:=I_L\cap(I_R+b)$ as follows
\[
\delta_b(y_0)=y_R(y_0-b)+b-y_L(y_0).
\]
Indeed, the zeros of $\delta_b$ in $\operatorname{int}(I_b)$ are in bijective correspondence with crossing periodic solutions of \eqref{cf} as well as simple zeros $\delta_b$ in $\operatorname{int}(I_b)$ are in bijective correspondence with hyperbolic limit cycles of \eqref{cf}. 

In light of Lemma \ref{lem:semi}, we will show that, for each $b\in\R$, the number of simple zeros of $\delta_b$ does not exceed $8$.

Taking into account the derivatives of $y_L$ and $y_R$ given in \eqref{eq:odeL}, one can easily see that, if $y_0^*\in \operatorname{int}(I_b)$ satisfies $\delta_b(y_0^*)=0$, then 
\[
\delta_b'(y_0^*)=\dfrac{(y_0^*-y_1^*)}{y_1^*(y_1^*-b)W_L(y_0^*)W_R(y_0^*-b)}F_b(y_0^*,y_1^*),
\]
where $y_1^*=y_R(y_0^*-b)+b=y_L(y_0^*)<\min(0,b)$ and $F_b$ is a polynomial function of degree 4 given by
\[
F_b(y_0,y_1)=m_0 + m_1 (y_0 + y_1) + m_2 y_0 y_1 + m_3 (y_0^2 + y_1^2) + 
 m_4 (y_0 y_1^2 + y_0^2 y_1) + m_5 y_0^2 y_1^2,
\]
with $m_i,$ $i=1,\ldots,5,$ being polynomial functions on the parameters of differential system \eqref{cf}.
 
 From Remark \ref{rem:prop}, $W_L(y_0^*)W_R(y_0^*-b)>0$ and, since $(y_0^*-y_1^*)y_1^*(y_1^*-b)>0$, thus
$\sgn(\delta'_b(y_0^*))=\sgn(F_b(y_0^*,y_1^*)).$
This means that the zero set $\gamma_b=F_b^{-1}(\{0\})$ separates the attracting hyperbolic crossing limit cycles from the repelling ones. 
Consequently, since two consecutive hyperbolic limit cycles of \eqref{cf} cannot have the same stability,  the number of them  and, therefore, the number of simple zeros of $\delta_b$ is bounded by the number of isolated intersection points between $\gamma_b$ and one of the curves $y_1=y_{L}(y_0)$ or $y_1=y_{R}(y_0-b)+b,$ $y_0\in \Int(I_b)$, increased by one.

It is sufficient to focus our attention to the curve $\mathcal{O}_b=\{(y_0,y_L(y_0)):\,y_0\in \Int (I_b)\}$. 
From Proposition \ref{rm:signoy0+y1}, one of the following cases holds:
\begin{enumerate}[(i)]
\item  $T_L=0$, then $\mathcal{O}_b \subset \{(y_0,-y_0):\,y_0>0\}$;
\item $T_L< 0$, then $\mathcal{O}_b \subset B^+:=\{(y_0,y_1):\, -y_0< y_1< 0\}$;
\item $T_L> 0$, then $\mathcal{O}_b \subset B^-:=\{(y_0,y_1):\,-y_1>y_0>0\}$.
\end{enumerate}

 In what follows, we are going to show that the number of isolated intersection points between $\gamma_b$ and $\mathcal{O}_b$ is at most $7$. First, for $T_L=0$, since from (i) $\mathcal{O}_b$ is a straight segment, by Bezout's theorem the number of isolated intersection points between $\gamma_b$ and $\mathcal{O}_b$ is at most $4$. Thus, from now on, we assume that $T_L\neq0$. In this case, $\mathcal{O}_b$ is a separating solution of the restricted vector field 
\begin{equation}\label{XLU}
\widehat X_L=X_L\big|_U\quad \text{with}\quad U=B\cap\Int\big(I_b\times(y_L(I_b)\cap y_R^b(I_b))\big),
\end{equation}
where $X_L$ is given in \eqref{dy_1L} and, by taking into account cases (ii) and (iii),
$B$ is either $B^+$ or $B^-$ provided that $T_L< 0$ or $T_L> 0$, respectively. Notice that, since $I_b$, $y_L(I_b)$, and $y_R^b(I_b)$ are intervals, then $U$ is an open simply connected subset of the quadrant $Q:=\{(y_0,y_1):\,y_0>0\,\text{ and }\, y_1<0\}$.

In order to use Theorem \ref{thm:count} for bounding the number of isolated intersection points between $\gamma_b$ and $\mathcal{O}_b$, we have to estimate the number of contact points between $\gamma_b$ and $\widehat X_L$, which can be done by means of the inner product
\[
G_b(y_0,y_1)=\langle\nabla F_b(y_0,y_1),X_{L}(y_0,y_1)\rangle.
 \] 
 One can see that $G_b$ is a polynomial function of degree $6$ given by
 \[
\begin{aligned}
G_b(y_0,y_1)
=& n_1 (y_0 + y_1) + n_2  y_0 y_1 + n_3 (y_0^2 + y_1^2) + n_4 (y_0^2 y_1 + y_0 y_1^2) +
  n_5 (y_0^3 + y_1^3)\\
  & + n_6 y_0^2 y_1^2 + n_7 (y_0^3 y_1 + y_0 y_1^3) + 
 n_8 (y_0^3 y_1^2 + y_0^2 y_1^3) + n_9 y_0^3 y_1^3,
\end{aligned}
 \] 
 where $n_i,$ $i=1,\ldots 9,$ are polynomial functions on the parameters of differential system \eqref{cf}.
  Accordingly, the number of contact points between $\gamma_b$ and $\widehat X_L$ is bounded by the number of isolated solutions of the polynomial system 
\begin{equation}\label{eq:system}
 F_b(y_0,y_1)=0 \,\text{ and }\,  G_b(y_0,y_1)=0,\,\,(y_0,y_1)\in U.
 \end{equation}

Now, since we are only interested in solutions of \eqref{eq:system} in $U$, we proceed with the following change of variables
 \[
(Y_0,Y_1)=\phi(y_0,y_1):=(y_0+y_1,y_0 y_1)\,\text{ for }\, (y_0,y_1)\in U,
\]
which is a diffeomorphism between the quadrant $Q$ and the half-plane $\{(Y_0,Y_1):Y_1<0\}$ that transforms the polynomial system \eqref{eq:system} into the following equivalent one
\begin{equation}\label{eq:system2}
\widetilde F_b(Y_0,Y_1)=0 \,\text{ and }\,  \widetilde G_b(Y_0,Y_1)=0,\,\, (Y_0,Y_1)\in\phi(U),
 \end{equation}
where, now, $\widetilde F_b$  and $\widetilde G_b$ are polynomial functions of degrees $2$ and $3$ given, respectively, by
\[
\begin{aligned}
\widetilde F_b(Y_0,Y_1)=& m_0 + m_1 Y_0 + (m_2 - 2 m_3) Y_1+ m_3 Y_0^2  + m_4 Y_0 Y_1 + m_5 Y_1^2,\\
 \widetilde G_b(Y_0,Y_1)=&n_1 Y_0+ (n_2 - 2 n_3) Y_1 + n_3 Y_0^2 + (n_4 - 3 n_5) Y_0 Y_1+ (n_6 - 2 n_7) Y_1^2+ n_5 Y_0^3 + n_7 Y_0^2 Y_1  + n_8 Y_0 Y_1^2 + n_9 Y_1^3.
\end{aligned}
\]

We claim that \eqref{eq:system2} has at most $5$ finite isolated solutions. Notice that, by Bezout's Theorem, polynomial system \eqref{eq:system2} has at most $6$ isolated solutions (finite or not).
For $D_L=0$, one can see that the polynomial system $\widetilde F_b(Y_0,Y_1)=0 \,\text{ and }\,  \widetilde G_b(Y_0,Y_1)=0$ has a solution at the infinity, decreasing the number of possible finite isolated solutions of 
\eqref{eq:system2} at least by $1$. 
Thus, it remains to analyze what happens for $D_L\neq0$. In this case, one can  check that
 \[
 (Y_0^*,Y_1^*)=\left(\dfrac{a_L T_L}{D_L},\dfrac{a_L^2}{D_L}\right)
 \]
is a solution of $\widetilde F_b(Y_0,Y_1)=0 \,\text{ and }\,  \widetilde G_b(Y_0,Y_1)=0$. Let us see that  $ (Y_0^*,Y_1^*)\notin\phi(U)$. On the one hand, depending on the sign of $T_L$, the set $\phi(U)$ satisfies that: (j) if $T_L> 0$, $\phi(U)\subset \{(Y_0,Y_1):Y_0<0,Y_1<0\}$; and (jj)
if $T_L<0$, $\phi(U)\subset \{(Y_0,Y_1):Y_0>0,Y_1<0\}$.
On the other hand, in order to determine the relative position of $(Y_0^*,Y_1^*)$ with respect to $\phi(U)$ we have to consider some cases. If $D_L>0$ or $a_L=0$, then  $Y_1^*\geq0$, which implies that $(Y_0^*,Y_1^*)\notin\phi(U)$. 
 If $D_L<0$ and $a_L<0$, then $\sgn(Y_0^*)=\sgn(T_L)$ and, consequently, from cases (j) and (jj) above, $(Y_0^*,Y_1^*)\notin\phi(U)$. Finally, if $D_L<0$ and  $a_L>0$, then the quadratic polynomial $W_L$ has two simple roots, $y_1^*<0<y_0^*$. One can easily seen that $(Y_0^*,Y_1^*)=\phi(y_0^*,y_1^*)$. From Remark \ref{rem:prop}, $y_0^*\notin\Int(I_L)$ and $y_1^*\notin\Int(y_L(I_L))$, then from the definition of $U$ in \eqref{XLU}, $(y_0^*,y_1^*)\notin U$ and, consequently, $(Y_0^*,Y_1^*)\notin\phi(U)$. 
 
Finally, since $\widetilde F_b$ has degree 2, we have that $\widetilde \gamma_b=\widetilde F_b^{-1}(\{0\})$ has at most two noncompact connected components in $\phi(U)$, which implies that $\gamma_b$ has at most $2$ noncompact connected components in $U$.
 
 Applying Theorem \ref{thm:count}, we conclude that $\gamma_b$ and $\mathcal{O}_b$ has at most $5+2=7$ intersections. Therefore, differential system \eqref{cf} has at most $8$ hyperbolic limit cycles and, consequently,  the number of simple zeros of the displacement function $\delta_b$ does not exceed $8$.

From here, we wish to apply Lemma \ref{lem:semi} to conclude that $\delta_b$ has at most $8$ isolated zeros for every $b\in\R$. However, it cannot be directly applied to $\delta_b$ because its domain depends on the parameter $b$. Thus, we proceed by contradiction as follows.
Assume that, for some $b^*\geq0$, $\delta_{b^*}$ has more than $8$ isolated zeros. Take an open interval $I\subset\Int( I_{b^*})$ that contains all the isolated zeros of $\delta_{b^*}$. Since $I_b=I_R^0+b$, we can consider a small interval $J$ containing $b^*$ for which $I\subset \Int( I_b)$ for every $b\in J$. Since $\delta_b|_I$ has at most $8$ simple zeros for every $b\in J$ and 
\[
\dfrac{\p}{\p b}\delta_b(y_0)=-y_R'(y_0-b)+1>0,
\,\text{ for }\, y_0\in I \,\text{ and }\, b\in J,
\]
Lemma \ref{lem:semi} implies that $\delta_b|_I$ has at most $8$ isolated zeros for every $b\in J$, which contradicts the initial assumption that $\delta_{b^*}$ has more than $8$ isolated zeros. It concludes the proof of Theorem \ref{main}.

\section*{Acknowledgements}

VC is partially supported by the Ministerio de Ciencia, Innovaci\'on y Universidades, Plan Nacional I+D+I cofinanced with FEDER funds, in the frame of the project PGC2018-096265-B-I00. FFS is partially supported by the Ministerio de Econom\'{i}a y Competitividad, Plan Nacional I+D+I cofinanced with FEDER funds, in the frame of the project MTM2017-87915-C2-1-P. VC and FFS are partially supported by the Ministerio de Ciencia e Innovaci\'on, Plan Nacional I+D+I cofinanced with FEDER funds, in the frame of the project PID2021-123200NB-I00, the Consejer\'{i}a de Educaci\'{o}n y Ciencia de la Junta de Andaluc\'{i}a (TIC-0130, P12-FQM-1658) and by the Consejer\'{i}a de Econom\'{i}a, Conocimiento, Empresas y Universidad de la Junta de Andaluc\'{i}a (US-1380740, P20-01160). DDN is partially supported by S\~{a}o Paulo Research Foundation (FAPESP) grants 2022/09633-5, 2021/10606-0, 2019/10269-3, and 2018/13481-0, and by Conselho Nacional de Desenvolvimento Cient\'{i}fico e Tecnol\'{o}gico (CNPq) grants 438975/2018-9 and 309110/2021-1.


\end{document}